\newtheorem{theorem}{Theorem}[section]
\newtheorem{lemma}[theorem]{Lemma}
\numberwithin{equation}{section}
\newtheorem{notation}[theorem]{Notation}
\theoremstyle{definition}
\newtheorem{definition}[theorem]{Definition}
\renewcommand{\wr}{\mathop{\rm wr}}
\newcommand{\Fix}{\mathrm{Fix}}
\newcommand{\m}{{\mathrm{mindeg}}}
\newcommand{\fix}{\mathrm{fix}\,}
\newcommand{\Aut}{\mathrm{Aut}}
\newcommand{\supp}{\mathrm{supp}}
\newcommand{\Alt}{\mathrm{Alt}}
\newcommand{\Sym}{\mathrm{Sym}}
\renewcommand{\O}{\mathop{\mathrm{O}}\nolimits}
\newcommand{\PSU}{\operatorname{\mathrm{PSU}}}
\newcommand{\PSp}{\mathop{\mathrm{PSp}}\nolimits}
\newcommand{\POmega}{\mathop{\mathrm{P}\Omega}\nolimits}
\newcommand{\Sp}{\operatorname{\mathrm{Sp}}}
\newcommand{\PSL}{\mathrm{PSL}}
\newcommand{\fpr}{\mathrm{fpr}}
\def\cent#1#2{{\bf C}_{{#1}}({{#2}})}
\begin{document}
\title[Minimal degree of permutation groups]{On minimal degree of transitive permutation groups with stabiliser being a $2$-group}

\author{Primo\v{z} Poto\v{c}nik}
\address{Faculty of Mathematics and Physics, University of Ljubljana, Jadranska 21, SI-1000 Ljubljana, Slovenia;\\
 also affiliated with 
Institute of Mathematics, Physics, and
  Mechanics, Jadranska 19, SI-1000 Ljubljana, Slovenia}
 \email{primoz.potocnik@fmf.uni-lj.si}

\author{Pablo Spiga}
\address{Dipartimento di Matematica e Applicazioni, University of Milano-Bicocca, Via Cozzi 55, 20125 Milano, Italy} 
\email{pablo.spiga@unimib.it}

\begin{abstract}
The minimal degree of a permutation group $G$ is defined as the minimal number of non-fixed points of a non-trivial element of $G$. In this paper we show that if $G$ is a transitive permutation group of degree $n$ having no non-trivial normal $2$-subgroups such that the stabiliser of a point is a $2$-group, then the minimal degree of $G$ is at least $\frac{2}{3}n$. The proof depends on the classification of finite simple groups.
\end{abstract}
\subjclass[2010]{05C25, 20B25}

\thanks{The first-named author gratefully acknowledges the support of the Slovenian Research Agency ARRS, core funding programme P1-0294 and research project J1-1691.}

\keywords{Valency 3, Valency 4, Vertex-transitive, Arc-transitive, fixed-points}
\maketitle

\section{Introduction}\label{sec:intro}
Given a finite  group acting on a finite set $\Omega$ and $g\in G$, we let 
$\Fix_\Omega(g):=\{\omega \in \Omega \mid \omega^g=\omega\}$
 denote the set of {\em fixed points} of $g$ and we let $\supp_\Omega(g):=\Omega \setminus \Fix_\Omega(g)$. The parameter
 $$
 \m_\Omega(G) := \min_{g\in G\setminus\{1\}} |\supp_\Omega(g)|
 $$
is then called the {\em minimal degree} of the permutation group $G$.
Minimal degree (and a related parameter $\Fix_\Omega(G) := 
\max_{g\in G\setminus\{1\}} |\Fix_\Omega(g)| =
|\Omega| - \m_\Omega(G)$, sometimes called the {\em fixity} of $G$) has been the focus of intense study by several authors with most work concentrating on proving upper and lower bounds on the minimal degree of primitive permutation groups and permutation actions of classical groups (see \cite{Bab,Tim1,GluMag,La}, to name a few). Far less papers deal with other permutation groups (see \cite{KPS,NewOBrSha}).

The aim of this paper is to consider a lower bound on the minimal degree of a transitive permutation group $G$ whose point stabiliser is a $2$-group and such that ${\bf O}_2(G)=1$ (where by ${\bf O}_2(G)$ we denote the largest normal $2$-subgroup of $G$). In particular, we prove the following:

\begin{theorem}
\label{proposition:7}
Let $G$ be a transitive permutation group on a set $\Omega$ with ${\bf O}_2(G)=1$ such that the point stabiliser $G_\omega$ is a $2$-group. 
Then $\m_\Omega(G) \ge  2|\Omega|/3$. 
\end{theorem}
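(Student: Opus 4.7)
The natural starting point is the identity
$$
\frac{\fix_\Omega(g)}{|\Omega|} = \frac{|g^G \cap G_\omega|}{|g^G|},
$$
valid upon identifying $\Omega$ with $G/G_\omega$. This reformulates the goal as $|g^G \cap G_\omega| \leq |g^G|/3$ for every $1 \neq g \in G$. Since $G_\omega$ is a $2$-group, $g^G \cap G_\omega = \emptyset$ whenever $g$ has order divisible by an odd prime, so it suffices to restrict attention to non-trivial $2$-elements of $G$.

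I would then exploit the structure provided by the hypothesis ${\bf O}_2(G)=1$. Any minimal normal subgroup $N$ of $G$ is either an elementary abelian $p$-group with $p$ odd or a direct product of isomorphic non-abelian simple groups, so $N\cap G_\omega$ is either trivial (in the abelian case) or contains no non-trivial normal $2$-subgroup of $N$. The cleanest situation is when $N$ is abelian and transitive: then $N$ is regular, $G = N \rtimes G_\omega$, $|\Omega|=|N|=p^d$ for some odd prime $p$, and for $g = nh$ with $n\in N$ and $1\ne h\in G_\omega$ the fixed-point set $\Fix_\Omega(g)$ is either empty or a coset of $C_N(h)=\ker(h-1_N)$. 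By Maschke's theorem (applicable since $|h|$ is coprime to $p$), $[N,h]$ has $\mathbb{F}_p$-dimension at least one, whence $|C_N(h)|\le |N|/p\le |N|/3$; the elements of $N\setminus\{1\}$ are fixed-point-free by regularity. This settles the affine primitive case.

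For the remainder, I would reduce to primitive actions and invoke the O'Nan--Scott theorem together with CFSG. The imprimitive reduction goes via block systems: if $G_\omega < H \le G$ then the $H$-orbits form a block system $\mathcal{B}$ with blocks of size $[H:G_\omega]$, and any fixed point of $g$ lies in a $g$-invariant block, so $\fix_\Omega(g) \le [H:G_\omega]\cdot \fix_{\mathcal{B}}(g)$; after a careful verification that the hypothesis ${\bf O}_2 = 1$ persists (possibly on a quotient, or on a single block for elements acting trivially on $\mathcal{B}$), induction on $|\Omega|$ applies. For primitive $G$ the socle is either elementary abelian of odd order (handled above), non-abelian simple (almost simple type), or a direct product of non-abelian simple groups (diagonal, product-action or twisted-wreath type); in each non-affine case CFSG provides a short list of pairs $(G,G_\omega)$ with $G_\omega$ a $2$-subgroup, and the ratio inequality $|g^G\cap G_\omega|/|g^G|\le 1/3$ must be verified directly, typically using existing fixed-point-ratio bounds for almost simple groups.

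The principal obstacle will be the almost-simple case: one must enumerate all almost-simple groups $G$ admitting a $2$-subgroup $G_\omega$ with trivial core and verify the conjugacy-ratio bound in each instance. The extremal configurations---those determining whether the constant $2/3$ is best possible---are likely to concentrate in small-rank Lie type groups (for instance $\PSL_2(q)$ acting on cosets of a Sylow $2$-normaliser) and in a few sporadic examples, while larger simple groups should satisfy much stronger fixity inequalities from existing work of Liebeck--Saxl, Guralnick--Magaard and related literature.
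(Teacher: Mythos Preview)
Your broad strategy matches the paper's: reduce via block systems and induction, then handle a base case using the O'Nan--Scott theorem and CFSG, with the almost-simple case requiring the most work. However, there is a genuine gap in your reduction step.

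You propose to ``reduce to primitive actions'' by passing to a block system $\mathcal{B}$ corresponding to an overgroup $H$ of $G_\omega$. The difficulty is that the induced action of $G$ on $\mathcal{B}$ has point stabiliser (the image of) $H$, which is not a $2$-group in general, so the inductive hypothesis does not apply to the block action. For instance, take $G=\Alt(5)$ on the $15$ cosets of a Sylow $2$-subgroup and $H\cong\Alt(4)$ maximal; the action on the $5$ blocks has point stabiliser $\Alt(4)$. One cannot simultaneously arrange that the block action is primitive and that the block stabiliser is a $2$-group, so your inductive scheme as written does not close.

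The paper circumvents this by reducing not to primitive but to \emph{quasiprimitive} actions, in two steps. First, taking $H$ to be a Sylow $2$-subgroup $Q\ge G_\omega$ gives a faithful block action with $2$-group stabiliser $Q$ and ${\bf O}_2=1$; induction then reduces to the case where $G_\omega$ is itself Sylow. Second, with $G_\omega$ Sylow, one takes the block system of $N$-orbits for a minimal normal subgroup $N$: the quotient $G/K$ on blocks has a Sylow $2$-subgroup as stabiliser and ${\bf O}_2(G/K)=1$ (the block set has odd size), while for $g\in K$ one checks via a Frattini argument that $K$ acts faithfully on each block with ${\bf O}_2(K)=1$. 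This reduces to $G$ quasiprimitive with $G_\omega$ a Sylow $2$-subgroup, and one then invokes the quasiprimitive O'Nan--Scott theorem.

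Two further points. Since $|\Omega|=|G:G_\omega|$ is odd, the only quasiprimitive O'Nan--Scott types that occur are HA, PA and AS; the diagonal and twisted-wreath types you list do not arise. For the PA type the paper uses the explicit Cartesian product structure together with induction on a single factor, not existing fixed-point-ratio bounds. Finally, the AS case is not merely a matter of quoting Liebeck--Saxl or Guralnick--Magaard: those results have genuine exceptions with $\fpr>1/3$ (for example $\Aut(M_{22})$ on $22$ points), and the paper runs a substantial case analysis (sporadic, alternating, exceptional, classical, the last further split by $q$ and by the Aschbacher class of maximal overgroups of $G_\omega$), exploiting throughout the extra leverage that $G_\omega$ is a full Sylow $2$-subgroup.
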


Our motivation to prove this result stems from the
investigation of the minimal degree of permutation groups
appearing as arc-transitive automorphism groups of finite   connected (possibly directed) graphs,
or equivalently, permutation groups admitting a   connected suborbit,
where by a   connected suborbit of a transitive permutation group $G\le \Sym(\Omega)$ we mean an orbit $\Sigma$ of the point-stabiliser $G_\omega$ acting on $\Omega\setminus\{\omega\}$, such that the directed graph with the vertex-set $\Omega$
and the edge-set  $\{ (\omega^g,\delta^g) \mid \delta\in \Sigma, g\in G\}$ is   connected.
 Note that a transitive permutation group $G\le\Sym(\Omega)$ is primitive if and only if all of its non-trivial suborbits are   connected.
In view of the numerous results regarding the minimal degree of permutation groups, it is thus natural to relax the condition of primitivity to that of an
existence of a single   connected suborbit. In fact, based on the available computational data \cite{conder,condercensus,census1,census2}, one can see that under some additional assumptions on the length and the self-pairity of the   connected suborbit (and modulo some well-defined exceptions) lower bounds 
on the minimal degree holding for primitive permutation groups seem to extend to this more general situation.
 Theorem~\ref{proposition:7}  represents a crucial step in a recent result~\cite{PotSpi}
stating that apart from a well-understood infinite family and a finite set of examples,
every transitive permutation group $G\le\Sym(\Omega)$ admitting 
a   connected suborbit of length at most $2$ or a
self-paired
   connected suborbit of length at most $4$ satisfies $\m_\Omega(G) \ge  2|\Omega|/3$ (a suborbit $\Sigma$ is {\em self-paired} if for every edge $(\omega',\omega'')$ of the corresponding directed graph also the pair $(\omega'',\omega')$ is an edge of that directed graph).

We would also like to point out that Guralnick and Magaard~\cite{GurMag} (building on an earlier work of Lawther, Liebeck and Saxl~\cite{La,LS}) have proved that, except for an explicit list of exceptions, the minimal degree of every primitive permutation group of degree $n$ is at least $n/2$. Our proof of Theorem~\ref{proposition:7} quickly reduces to the case that the group $G$ under consideration is quasiprimitive. In this sense our result can be thought of as an attempt to extend the result of Guralnick and Magaard to the case of quasiprimitive groups.

We use a fairly standard notation.
Given a set $\Omega$, we denote by $\Sym(\Omega)$ and $\Alt(\Omega)$ the symmetric and the alternating group on $\Omega$. When the domain $\Omega$ is irrelevant or clear from the context, we write $\Sym(n)$ and $\Alt(n)$ for the symmetric and alternating group of degree $n$.
Given a permutation $g\in \Sym(\Omega)$, we write $\fpr_\Omega(g)$ for the {\em fixed-point-ratio} of $g$, that is 
$$
\fpr_\Omega(g):=\frac{|\Fix_\Omega(g)|}{|\Omega|}.
$$


A subgroup $G$ of $\Sym(\Omega)$ is said to be {\em semiregular} if the identity is the only element of $G$ fixing some point of $\Omega$.
 Let $G$ be a group and let $H$ be a subgroup of $G$, we denote by $H\backslash G$ the {\em set of right cosets} of $H$ in $G$. Recall that $G$ acts transitively on $H\backslash G$ by right multiplication. The normaliser of $H$ in $G$ is denoted by ${\bf N}_G(H)$.

\section{Proof of Theorem~$\ref{proposition:7}$}

We begin by two useful lemmas and then proceed to the proof of Theorem~$\ref{proposition:7}$.

\begin{lemma}\label{lemma:4}Let $X$ be a group acting transitively on $\Omega$, let $\Sigma$ be a system of imprimitivity for $X$ in its action on $\Omega$ and let $x\in X$. Then $\fpr_{\Omega}(x)\le \fpr_{\Sigma}(x)$.
\end{lemma}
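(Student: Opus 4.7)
The plan is to exploit the fact that if $x$ fixes a point $\omega \in \Omega$, then it must also fix (setwise) the unique block $B \in \Sigma$ containing $\omega$. Consequently, every point of $\Omega$ fixed by $x$ lies in some block that is fixed by $x$ in the action on $\Sigma$.

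First I would observe that, since $X$ is transitive on $\Omega$ and $\Sigma$ is a system of imprimitivity, all blocks of $\Sigma$ have a common cardinality $b$, and $|\Omega| = b\,|\Sigma|$. Next, partition $\Fix_\Omega(x)$ according to which block its elements lie in: for each $B \in \Fix_\Sigma(x)$ the intersection $B \cap \Fix_\Omega(x)$ contributes at most $|B| = b$ points, while for $B \notin \Fix_\Sigma(x)$ no point of $B$ can be fixed by $x$. This yields the bound
\[
|\Fix_\Omega(x)| \;\le\; b \cdot |\Fix_\Sigma(x)|.
\]

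Dividing both sides by $|\Omega| = b\,|\Sigma|$ gives
\[
\fpr_\Omega(x) \;=\; \frac{|\Fix_\Omega(x)|}{|\Omega|} \;\le\; \frac{b\,|\Fix_\Sigma(x)|}{b\,|\Sigma|} \;=\; \fpr_\Sigma(x),
\]
which is exactly the desired inequality.

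There is essentially no obstacle here: the whole argument is a one-line counting observation based on the defining property of a block system (blocks are either fixed setwise or moved to a disjoint block). The only point worth being careful about is the transitivity hypothesis, which guarantees that all blocks have equal size so that the cancellation of $b$ in the final step is legitimate.
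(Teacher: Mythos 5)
Your argument is correct and is essentially the same as the paper's: fixed points of $x$ lie only in blocks fixed setwise by $x$, each such block contributes at most its cardinality, and the common block size (from transitivity) cancels to give $\fpr_\Omega(x)\le\fpr_\Sigma(x)$. No issues.
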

\begin{proof}
This is clear because, if $x$ fixes a block $B\in \Sigma$, then $\Fix_{B}(x)\subseteq B$ and hence $|\Fix_{B}(x)|\le |B|$.
\end{proof}

\begin{lemma}\label{eq:22}
Let $X$ be a group acting on a set $\Omega$, let $Y$ be a normal subgroup of $X$, let $x\in X$ and let $\omega\in \Omega$. Then
\begin{equation}\label{eq:2}
\fpr_{\omega^Y}(x)=\frac{|x^Y\cap X_\omega|}{|x^Y|},
\end{equation}
where $x^Y:=\{x^y\mid y\in Y\}$ is the $Y$-conjugacy class of the element $x$.
\end{lemma}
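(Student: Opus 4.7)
The plan is to establish the identity by a double count, with the two counts being matched via the orbit–stabiliser theorem applied twice: once to the action of $Y$ on $\Omega$, and once to the conjugation action of $Y$ on $x^Y$.

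First I would translate the fixed-point condition into a conjugation condition: for $y\in Y$, the point $\omega^y$ is fixed by $x$ iff $\omega^{yxy^{-1}}=\omega$, i.e.\ $yxy^{-1}\in X_\omega$. Since $Y\trianglelefteq X$, the element $yxy^{-1}$ lies in $Y\cdot x\cdot Y\subseteq x^Y$ (using that $y\mapsto y^{-1}$ is a bijection of $Y$ so that the set $\{yxy^{-1}:y\in Y\}$ is exactly $x^Y$), and the map $y\mapsto yxy^{-1}$ from $Y$ onto $x^Y$ has all fibres of size $|\cent{Y}{x}|$.

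Next I would count the set $T:=\{y\in Y : yxy^{-1}\in X_\omega\}$ in two ways. Grouping the elements of $T$ by the value of $yxy^{-1}$ in $x^Y$ gives $|T|=|\cent{Y}{x}|\cdot |x^Y\cap X_\omega|$. Grouping them instead by the value of $\omega^y\in \omega^Y$ (each fixed point of $x$ in $\omega^Y$ being hit by exactly $|Y_\omega|$ elements of $Y$, and each non-fixed point by none) gives $|T|=|Y_\omega|\cdot |\Fix_{\omega^Y}(x)|$. Equating the two expressions and dividing by $|\omega^Y|=|Y|/|Y_\omega|$ yields
\[
\fpr_{\omega^Y}(x)=\frac{|\Fix_{\omega^Y}(x)|}{|\omega^Y|}=\frac{|\cent{Y}{x}|\cdot |x^Y\cap X_\omega|}{|Y|}=\frac{|x^Y\cap X_\omega|}{|x^Y|},
\]
the last equality being $|x^Y|=|Y|/|\cent{Y}{x}|$.

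There is no real obstacle here; the only point requiring mild care is the verification that $\{yxy^{-1}:y\in Y\}$ is literally the class $x^Y$ (rather than some right-handed variant) and that the fibres of $y\mapsto yxy^{-1}$ really do have constant size $|\cent{Y}{x}|$, both of which are immediate from the normality of $Y$ in $X$ and from the definition of the centraliser.
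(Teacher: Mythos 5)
Your argument is correct and is essentially the paper's own proof: both amount to the same double count yielding $|x^Y|\,|\Fix_{\omega^Y}(x)|=|x^Y\cap X_\omega|\,|\omega^Y|$, the paper counting incidences between $x^Y$ and $\omega^Y$ in a bipartite graph with constant valencies, while you count the set $\{y\in Y : yxy^{-1}\in X_\omega\}$ and divide out the constant fibre sizes $|\cent{Y}{x}|$ and $|Y_\omega|$ via orbit--stabiliser. One cosmetic slip: the asserted containment $Y\cdot x\cdot Y\subseteq x^Y$ is false in general (a double coset need not lie in a conjugacy class), but it is never used, since what your count actually requires --- and what your parenthetical correctly justifies --- is only that $\{yxy^{-1}:y\in Y\}=x^Y$ with fibres of size $|\cent{Y}{x}|$.
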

\begin{proof}
This equality is classic, see for instance~\cite{LS}. Here we present a short proof: consider the bipartite graph with one side of the bipartition labeled by the elements of  $x^Y$ and the other side of the bipartition labeled by the elements of $\omega^Y$. Declare $x'\in x^Y$ adjacent to $\omega'$ if $x'$ fixes $\omega'$. Clearly, $x'\in x^Y$ has valency $|\Fix_{\omega^Y}(x')|=|\Fix_{\omega^Y}(x)|$ and $\omega'\in \omega^Y$ has valency $|x^Y\cap X_{\omega'}|=|x^Y\cap X_{\omega}|$. 
 Thus $|x^Y||\Fix_{\omega^Y}(x)|=|x^Y\cap X_\omega||\omega^Y|$.
\end{proof}

The rest of the section is devoted to the proof Theorem~\ref{proposition:7}.
Let $G$ be a transitive permutation group acting on a set $\Omega$, $|\Omega|=n$, with ${\bf O}_2(G)=1$ such that the point stabiliser $G_\omega$ is a $2$-group. 
We need prove that $\fix_\Omega(G) \le n/3$. 
Our proof is by induction on $|\Omega|+|G|$.

 Let $Q$ be a Sylow $2$-subgroup of $G$ with $G_\omega\le Q$ and let $B:=\omega^Q$. Since $G_\omega \le Q$,  the set $\Sigma:=\{B^x\mid x\in G\}$ is a $G$-invariant partition of $\Sigma$ upon which $G$ acts with the stabiliser of the element $B\in \Sigma$ being $Q$.
 As ${\bf O}_2(G)=1$ and $Q$ is a $2$-group, the action of $G$ on $\Sigma$ is faithful. If $|\Sigma|<|\Omega|$, then by induction $\fpr_\Sigma(g)\le 1/3$, for every $g\in G\setminus\{1\}$. Therefore, from Lemma~\ref{lemma:4}, we have $\fpr_\Omega(g)\le 1/3$, for every $g\in G\setminus\{1\}$. We may thus suppose that $Q=G_\omega$, that is, 
\begin{equation}\label{sylow}
G_\omega \textrm{ is a Sylow }2\textrm{-subgroup of }G.
\end{equation}

Let $N$ be a minimal normal subgroup of $G$ and let $K$ be the kernel of the action of $G$  on the set $\Sigma$ of $N$-orbits. Suppose that $N$ is not transitive on $\Omega$. In particular, $G/K$ is a non-identity transitive permutation group on $\Sigma$ of odd degree greater than $1$; moreover, given $B\in \Sigma$ and $\omega\in B$, the setwise stabilizer of $B$ in $G$ is $G_\omega K$ and $G_\omega K/K$ is a Sylow $2$-subgroup of $G/K$. By induction, if $g\in G\setminus K$, then $\fpr_\Sigma(gK)\le 1/3$ and hence $\fpr_{\Omega}(g)\le \fpr_{\Sigma}(gK)\le 1/3$. Suppose that $g\in K\setminus\{1\}$. Since $K\unlhd G$ and $G_\omega$ is a Sylow $2$-subgroup of $G$, we deduce that $K_\omega$ is a Sylow $2$-subgroup of $K$. In particular, from the Frattini argument, we have 
$$G=K{\bf N}_G(K_\omega).$$ 
This implies that the core of $K_\omega$ in $K$ equals the core of $K_\omega$ in $G$, which is trivial,
implying that $K$ acts faithfully on $\omega^K$.
Since this argument does not depend upon $\omega\in \Omega$, $K$ acts faithfully on each  element of $\Sigma$. In particular, for every $B\in \Sigma$, the restriction of $g$ to $B$ is a non-identity permutation and hence, by induction, $\fpr_B(g)\le 1/3$. Since this argument does not depend upon $B\in\Sigma$, we have $\fpr_\Omega(g)\le 1/3$.
 
It remains to deal with the case that every minimal normal subgroup of $G$ is transitive on $\Omega$, that is, $G$  is quasiprimitive.

The class of quasiprimitive permutation groups may be described (see~\cite{11_1}) in
a fashion very similar to the description given by the O'Nan-Scott Theorem for
primitive permutation groups. In~\cite{13_1} this description is refined and eight types
of quasiprimitive groups are defined, namely HA, HS, HC, SD, CD, TW, PA and
AS, such that every quasiprimitive group belongs to exactly one of these types. As our group $G$ has odd degree, it is readily seen, using the terminology in~\cite{11_1,13_1}, that it is of HA, AS or PA type, that is, Holomorphic Abelian, Almost Simple or Product Action. We refer the reader to~\cite{11_1,13_1} for more informations on the structure of groups of HA, AS or PA type, or to~\cite{PrSn} for an extensive treatment of permutation groups and Cartesian decompositions.

Assume that $G$ has O'Nan-Scott type HA; let $V$ be the socle of $G$, let $g\in G\setminus\{1\}$ with $\Fix_\Omega(g)\ne\emptyset$ and let $\omega\in \Fix_\Omega(g)$. As ${\bf O}_2(G)=1$, $V$ is an elementary abelian $p$-group, for some prime $p>2$. Then $G=V\rtimes G_\omega$ and the action of $G$ on $\Omega$ is permutation equivalent to the natural holomorph action of $G$ on $V$, with $V$ acting by right multiplication and with $G_\omega$ acting by conjugation. Using this identification, we have $$\fpr_\Omega(g)=\frac{1}{|V:\cent V g|}\le \frac{1}{p}\le \frac{1}{3}.$$

Assume that $G$ has O'Nan-Scott type PA.
Following~\cite{11_1,13_1}, we  set some notation and state some facts
regarding the groups of the O'Nan-Scott type PA.
Let $g\in G\setminus\{1\}$ with $\Fix_\Omega(g)\ne\emptyset$, let $\omega\in \Fix_\Omega(g)$ and $N$ be the socle of $G$. Then there exists a finite non-abelian simple group $T$ such that $N=T_1\times T_2\times\cdots \times T_\ell$ for some $\ell \ge 2$ with $T_i\cong T$ for each $i$. The group $G$ can then be identified with a subgroup of $\Aut(T)\wr \Sym(\ell)$. Let $R$ be a Sylow $2$-subgroup of $T$ and let $\Delta$ be the set $R\backslash T$ of right cosets of $R$ in $T$.
As $G_\omega$ is a Sylow $2$-subgroup of $G$, we have 
\begin{equation*}
G_\omega\cap N=R_1\times \cdots \times R_\ell\cong R^\ell,
\end{equation*}
where $R_i$ is a Sylow $2$-subgroup of $T_i$ for each $i$. From~\cite{11_1,13_1}, the action of  $G$ on $\Omega$ is permutation isomorphic to the natural Cartesian product action of $G$ on $\Delta^\ell$. By identifying $\Omega$ with $\Delta^\ell$, we have $G\le W$ with $W:=\Aut(T)\wr \Sym(\ell)$, where $W$ acts on $\Omega$ with the Cartesian product action. In particular, we may write the elements $x\in W$ in the form 
$$x=(a_1,a_2,\ldots,a_\ell)\sigma,$$ for some $a_1,a_2,\ldots,a_\ell\in \Aut(T)$ and $\sigma\in \Sym(\ell)$. Recall that, if $(\delta_1,\delta_2,\ldots,\delta_\ell)\in \Delta^\ell=\Omega$, then
\begin{align*}
(\delta_1,\delta_2,\ldots,\delta_\ell)^x&=
(\delta_1,\delta_2,\ldots,\delta_\ell)^{(a_1,a_2,\ldots,a_\ell)\sigma}
=(\delta_1^{a_1},\delta_2^{a_2},\ldots,\delta_\ell^{a_\ell})^\sigma
=(\delta_{1\sigma^{-1}}^{a_{1\sigma^{-1}}},\delta_{2\sigma^{-1}}^{a_{2\sigma^{-1}}},\ldots,
\delta_{\ell\sigma^{-1}}^{a_{\ell\sigma^{-1}}}).
\end{align*}
We write $g=(a_1,a_2,\ldots,a_\ell)\sigma$.

Suppose $\sigma\ne 1$. Using the explicit description of $g$ and of the action of $g$ on $\Omega$, with a computation we obtain
$$\fpr_{\Omega}(g)\le \frac{1}{|\Delta|}.$$
As $|\Delta|=|T:R|$ and $T$ is a non-abelian simple group, we have $|\Delta|\ge 3$ and hence $\fpr_\Omega(g)\le \/3$.

Suppose $\sigma= 1$. As $g\ne 1$, without loss of generality, we may assume $a_1\ne 1$. Then $$\Fix_{\Omega}(g)=\Fix_{\Delta^\ell}(g)\subseteq \Fix_{\Delta}(a_1)\times \Delta^{\ell-1}.$$ Since $\ell\ge 2$, we have $|\Delta|<|\Omega|$ and hence, by induction,  $\fpr_{\Delta}(a_1)\le 1/3$ and $\fpr_\Omega(g)\le 1/3$.

It remains to deal with the case that $G$ is an almost simple group. Let $T$ be the socle of $G$.  We now divide the proof in four parts, depending on whether $T$ is a sporadic, an alternating, an exceptional group of Lie type or a classical group.

\subsection{Sporadic groups}

Here we consider the situation where the socle $T$ is one of the sporadic finite simple groups.
The proof is entirely computational and uses the astonishing package  ``The GAP character Table Library"~\cite{AtlasRep} implemented in the computer algebra system \texttt{GAP}~\cite{GAP4}.  For sporadic groups, the proof of Theorem~\ref{proposition:7} follows immediately from Lemma~\ref{lemma:4} and from Lemma~\ref{l:SP}.

\begin{lemma}\label{l:SP}
Let $G$ be an almost simple primitive group on $\Omega$ with socle a sporadic simple group. Then
$$\fpr_{\Omega}(g)\le \frac{1}{3},$$
for every $g\in G\setminus\{1\}$, except when $G=\Aut(M_{22})$ in its primitive action of degree $22$ where the maximum fixed point ratio is $4/11$. Moreover, for each $2$-subgroup $Q$ of $\Aut(M_{22})$ and for each $x\in \Aut(M_{22})$ with $g\ne 1$, in the action of $\Aut(M_{22})$ on the set $Q\setminus \Aut(M_{22})$ of cosets of $Q$ in $\Aut(M_{22})$, we have $\fpr_{Q\backslash \Aut(M_{22})}(g)\le 3/55<1/3$.
\end{lemma}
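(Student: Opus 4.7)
The plan is to treat Lemma~\ref{l:SP} as a finite, essentially computational verification carried out with the GAP character table library, which stores the ordinary character tables, together with all primitive permutation characters, of every sporadic simple group and every one of their almost simple extensions. The key tool is the standard identity: for a transitive action of $G$ on the cosets of a subgroup $H$, one has $|\Fix(g)|=\pi_H(g)$, where $\pi_H=1_H^G$ is the permutation character, so that $\fpr(g)=\pi_H(g)/[G:H]$.

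For the first assertion I would run through every almost simple group $G$ with sporadic socle, through every conjugacy class of maximal subgroups $H$ of $G$, and through every non-identity conjugacy class of $G$, computing the ratio $\pi_H(g)/[G:H]$ and checking that it is at most $1/3$. The data needed (character tables, lists of maximal subgroups, permutation characters) are all available through the ATLAS and its GAP implementation, so the entire check reduces to a short script that outputs a table of these maxima. The only case in which the bound $1/3$ fails is the natural action of $\Aut(M_{22})$ of degree $22$, where the maximum value $4/11$ is attained by an involution lying in $\Aut(M_{22})\setminus M_{22}$ and fixing $8$ of the $22$ points.

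For the second assertion I would invoke Lemma~\ref{eq:22} with $X=Y=\Aut(M_{22})$: for any subgroup $Q$ of $G:=\Aut(M_{22})$ and any $g\in G$, the fixed-point ratio of $g$ in the coset action of $G$ on $Q\backslash G$ equals $|g^G\cap Q|/|g^G|$. When $Q$ is a $2$-group this intersection is empty unless $g$ is a non-trivial $2$-element, so it suffices to verify $|g^G\cap Q|/|g^G|\le 3/55$ for each non-trivial $2$-class $g^G$ and each $G$-conjugacy class of $2$-subgroups $Q$. The class sizes $|g^G|$ are read directly from the character table; since the smallest $2$-classes are not large enough for the trivial bound $|g^G\cap Q|\le |Q|\le |G|_2 = 2^8 = 256$ alone to suffice, I would enumerate the $2$-subgroups of a fixed Sylow $2$-subgroup of $G$ up to $G$-conjugacy inside the permutation representation of $\Aut(M_{22})$ of degree $22$, and for each such $Q$ and each $2$-class $g^G$ compute $|g^G\cap Q|$ directly, confirming the inequality in every case.

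The main obstacle is not conceptual but one of bookkeeping: one must ensure that the list of primitive actions checked is complete for every sporadic almost simple group, and that the enumeration of $G$-conjugacy classes of $2$-subgroups inside $\Aut(M_{22})$ is exhaustive. Both tasks are routine in GAP, and since $\Aut(M_{22})$ has order only $887040$, the refined computation in the exceptional case is carried out without effort.
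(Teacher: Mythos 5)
Your treatment of the second assertion (the coset actions of $\Aut(M_{22})$ on $2$-subgroups, via the identity $\fpr_{Q\backslash G}(g)=|g^G\cap Q|/|g^G|$ and a direct enumeration inside the degree-$22$ representation) is sound and matches the computational spirit of the paper. The problem is the first assertion: your argument rests on the claim that the GAP character table library contains \emph{all} primitive permutation characters of every almost simple group with sporadic socle, and that claim is false precisely in the two hardest cases. For the Baby Monster $B$ acting on the cosets of a maximal subgroup $H\cong(2^2\times F_4(2)){:}2$, the fusion of some $H$-classes in $B$ is unknown, so $1_H^B$ cannot be computed from the library; and for the Monster essentially no primitive permutation characters are available (the only one that has been computed is the action on the cosets of the double cover $2.B$, by Breuer and Lux), and at the time of writing even the list of maximal subgroups of the Monster was not complete. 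So the ``short script'' you describe simply cannot be run in these cases, and they are not a bookkeeping issue: they require separate arguments, which form the bulk of the paper's proof.

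Concretely, the paper handles them as follows. For $B$ on $(2^2\times F_4(2)){:}2$ it uses $\fpr_\Omega(g)=|g^G\cap G_\omega|/|g^G|\le |G_\omega|/|g^G|$, which settles every class except $2A$ and $2B$; for those it replaces $|G_\omega|$ by the number $t$ of solutions of $x^2=1$ in $G_\omega$, computed from the character table of $G_\omega$ via the Frobenius--Schur indicator formula ($t=1605784576$), and checks $t/|g^G|\le 1/3$. For the Monster it splits according to the point stabiliser: if $G_\omega\cong 2.B$ it uses the known Breuer--Lux permutation character; otherwise it uses the (almost complete) classification of maximal subgroups to get $|G_\omega|\le|2^{1+24}.Co_1|$, which kills all classes except $2A$, and for $2A$ it invokes the character-theoretic bound $\fpr_\Omega(g)\le\max\{(1+|\chi(g)|)/(1+\chi(1))\}$ of Liebeck and Saxl. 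Your proposal would need to be supplemented by arguments of this kind (or equivalents) before it establishes the lemma; as written, it would stall exactly where the permutation-character data do not exist.
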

\begin{proof}
 Apart from 
\begin{itemize}
\item the Monster and
\item the action of the Baby Monster on the cosets of a maximal subgroup of type $(2^2\times F_4(2)):2$,
\end{itemize} each permutation character of each primitive permutation representation of an almost simple group with socle a sporadic simple group is available in \texttt{GAP} via the package ``The GAP character Table Library". Therefore, except for the two cases mentioned above,  we can quickly and easily use \texttt{GAP} to test the veracity of  the lemma.  The permutation character of the Baby Monster $G$ on the cosets of a maximal subgroup $M$ of type $(2^2\times F_4(2)):2$ is missing from the \texttt{GAP} library because the conjugacy fusion of some of the elements of $M$ in $G$ remains a mystery: this information is vital for computing the permutation character.

In the rest of the proof we use Lemma~\ref{eq:22}.
Suppose then $G$ is the Baby Monster. Let $G_\omega$ be the stabilizer in $G$ of the point $\omega\in \Omega$ and suppose $G_\omega\cong (2^2\times F_4(2)):2$. Using the ATLAS notation, with a direct computation we see that
$$\fpr_\Omega(g)=\frac{|g^G\cap G_\omega|}{|g^G|}\le \frac{|G_\omega|}{|g^G|}\le \frac{1}{3},$$
unless $g$ is in the conjugacy class $1A$, $2A$ or $2B$. In particular, the lemma is proved also in this case except when $g$ is in the conjugacy class $2A$ or $2B$. Let us denote by $t$ the number of solutions to the equation $x^2=1$ in $G_\omega$. From \cite[(4.6)]{Isaacs}, we see that
$$t=\sum_{\chi\in \mathrm{Irr}^+(G_\omega)}\chi(1)-\sum_{\chi\in \mathrm{Irr}^-(G_\omega)}\chi(1),$$
where $\mathrm{Irr}^+(G_\omega)$ and $\mathrm{Irr}^-(G_\omega)$ are the sets of the irreducible complex characters of $G_\omega$ of orthogonal and of symplectic type. As the character table of $G_\omega$ is available in GAP, we can compute $t$ with this formula and we obtain that $t=1605784576$. Therefore, when $g$ is in the conjugacy class $2A$ and $2B$, we can refine the previous bound and we obtain
$$\fpr_{\Omega}(g)=\frac{|g^G\cap G_\omega|}{|g^G|}\le \frac{t}{|g^G|}\le \frac{1}{3}.$$

\smallskip

For the rest of this proof we may assume that $G$ is the Monster group. Let $\omega\in \Omega$. From~\cite[Section~3.6]{wilsonArXiv}, we see that the classification of the maximal subgroups of the Monster  is complete except for a few small open cases. Suppose first that $G_\omega$ is isomorphic to the double cover of the Baby monster. Let $\pi$ be the permutation character of $G$ on $\Omega$. It was proved by Breuer and Lux~\cite[page~2309]{BreuerLux} that $$\pi=\chi_1+\chi_2+\chi_4+\chi_5+\chi_9+\chi_{14}+\chi_{21}+\chi_{34}+\chi_{35}$$ (this was also proved independently in \cite{MS}).
With this character we can check that no non-identity element of $G$ fixes more than $1/3$ of the points. (Accidentally, as far as we are aware, $\pi$ is the only permutation character of the Monster that has been computed.) For the rest of the proof, we may assume that $G_\omega$ is not conjugate to the double cover of the Baby monster. From~\cite[Section~3.6]{wilsonArXiv}, it follows that $|G_\omega|\le |2^{1+24}.Co_1|$. Now, it is an easy computation to check that
$$\fpr_\Omega(g)\le \frac{|g^G\cap G_\omega|}{|g^G|}\le \frac{|G_\omega|}{|g^G|}\le\frac{|2^{1+24}.Co_1|}{|g^G|}<\frac{1}{3},$$
except when $g$ is in the conjugacy class $1A$ and $2A$.  Therefore, for the rest of the proof we may assume that $g$ is in the conjugacy class $2A$. From \cite[Lemma 2.7]{LS}, we have
$$\fpr_{\Omega}(g)\le \max\left\{\frac{1+|\chi(g)|}{1+\chi(1)}\mid \chi\in \mathrm{Irr}(G),\chi(1)\ne 1\right\}.$$
This quantity can be easily computed and it is less than $1/3$.
\end{proof}

This allows us to finish the proof of Theorem~\ref{proposition:7} when $T$ is a sporadic simple group:
Let $\Sigma$ be a maximal system of imprimitivity for $G$ acting on $\Omega$. 
Except when $G=\Aut(M_{22})$, the proof follows applying Lemmas~\ref{lemma:4} and~\ref{l:SP} to $\Sigma$. When $G=\Aut(M_{22})$, the proof follows directly from Lemma~\ref{l:SP}. 

\subsection{Exceptional groups of Lie type}

Suppose now that the socle $T$ of $G$ is an exceptional groups of Lie type.
Exactly as for the sporadic groups, we prove a much stronger statement than needed for the proof of Theorem~\ref{proposition:7}.

\begin{lemma}\label{l:EX}
Let $G$ be an almost simple primitive group on $\Omega$ with socle an exceptional simple group of Lie type. Then
$\fpr_{\Omega}(g)\le \frac{1}{3}$
for every $g\in G\setminus\{1\}$.
\end{lemma}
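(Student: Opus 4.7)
Since $G$ is primitive and almost simple with socle $T$ an exceptional simple group of Lie type, the point stabiliser $M := G_\omega$ is a maximal subgroup of $G$. Applying Lemma~\ref{eq:22} with $Y = G$ yields
$$
\fpr_\Omega(g) = \frac{|g^G \cap M|}{|g^G|},
$$
so it suffices to prove that $3|g^G \cap M| \le |g^G|$ for every maximal subgroup $M$ of $G$ and every non-identity $g \in G$. The plan is then to split the argument according to the size of the defining field $\mathbb{F}_q$ of $T$.

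For $q$ above a small threshold depending on the Lie type, I would invoke the detailed fixed-point-ratio bounds obtained by Lawther, Liebeck and Seitz, together with the subsequent refinements of Burness, for primitive actions of almost simple exceptional groups. These yield estimates of the shape $\fpr_\Omega(g) \le c\, q^{-a}$ for explicit positive constants $c, a$ depending only on the type of $T$, and direct inspection of the tables in those papers shows that the right-hand side is at most $1/3$ as soon as $q$ is not extremely small. This disposes of all but a finite, explicit list of socles.

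The remaining cases—socles defined over very small fields, such as $G_2(2)'$, $G_2(3)$, $G_2(4)$, ${}^3D_4(2)$, ${}^2F_4(2)'$, ${}^2B_2(8)$, ${}^2G_2(27)$, $F_4(2)$, $E_6(2)$ and a few others—I would settle computationally. For most of these, the permutation character of every primitive action is stored in the GAP Character Table Library used in Lemma~\ref{l:SP}, and the bound $\fpr_\Omega(g) \le 1/3$ can be read off directly. For the residual groups not fully covered by the library, one can first apply the crude estimate $|g^G \cap M| \le |M|$, which already gives $\fpr_\Omega(g) \le 1/3$ outside a short list of conjugacy classes with very large centraliser (essentially the involution classes); the remaining classes are then finished either by refining $|g^G \cap M|$ through the Isaacs count of solutions of $x^2 = 1$ in $M$ (exactly as in the Baby Monster analysis of Lemma~\ref{l:SP}), or by the character-ratio inequality
$$
\fpr_\Omega(g) \le \max\left\{\frac{1+|\chi(g)|}{1+\chi(1)} \;\Big|\; \chi \in \mathrm{Irr}(G),\ \chi(1) \ne 1\right\}.
$$

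The main obstacle is clearly the small-$q$ regime: the asymptotic bounds of Lawther--Liebeck--Seitz become tight there, and the largest examples (notably $F_4(2)$ and $E_6(2)$) are too big for a brute-force sweep over all maximal subgroups and all conjugacy classes. One expects the hardest situations to be the involution classes acting on cosets of parabolic maximal subgroups; it is precisely in these cases that the character-theoretic refinements described above are indispensable, and checking that they suffice is the delicate part of the argument.
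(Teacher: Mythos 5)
Your overall strategy (explicit fixed-point-ratio bounds for exceptional groups, plus machine computation for a residue of small cases) is the same in spirit as the paper's, but as written your argument has a real gap, and the gap comes from a misreading of the main input. Theorem~1 of Lawther--Liebeck--Seitz~\cite{La} is not an asymptotic estimate of the shape $c\,q^{-a}$ that only kicks in once $q$ clears a threshold: it is an explicit bound, valid for \emph{all} $q$, with the genuinely problematic configurations catalogued in their tables. Applied correctly it already yields $\fpr_\Omega(g)\le 1/3$ for every primitive action of every almost simple group whose socle is a bona fide exceptional group over $\mathbb{F}_2$ or $\mathbb{F}_3$, including $F_4(2)$, $E_6(2)$, ${}^3D_4(2)$, ${}^2F_4(2)'$, ${}^2B_2(8)$ and ${}^2G_2(27)$. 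The only socles it leaves open are $G_2(2)'\cong\PSU_3(3)$ and ${}^2G_2(3)'\cong\PSL_2(8)$, which are isomorphic to small classical groups and are disposed of by a trivial computation (the maximum ratio is exactly $1/3$ in both cases, attained on the degree $36$ action for $\PSU_3(3)$ and on the projective line of size $9$ for $\mathrm{P\Gamma L}_2(8)$).

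Because you treat the bound as asymptotic, you manufacture a long list of small-field cases and then defer the hardest of them ($F_4(2)$, $E_6(2)$, involutions on cosets of parabolic subgroups) to a sketched mixture of the crude bound $|g^G\cap M|\le|M|$, the Isaacs involution count, and the character-ratio inequality from~\cite{LS}, explicitly conceding that ``checking that they suffice is the delicate part of the argument.'' That concession is precisely where your proof is incomplete: the crude bound certainly fails for involutions on parabolic cosets (parabolic subgroups are far too large), the permutation characters of many of these actions are not all available in the character table library, and you never verify that the refinements close the remaining classes. So as it stands the lemma is not proved for the cases you yourself identify as critical. The fix is not more computation but a more careful reading of~\cite{La}: their explicit bounds eliminate every exceptional socle at once, and only the two degenerate groups $G_2(2)'$ and ${}^2G_2(3)'$ need a (very small) computer check.
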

\begin{proof}
 For exceptional groups of Lie type Lawther,
Liebeck and Seitz \cite[Theorem 1]{La} have obtained useful and explicit upper bounds
on $\fpr_\Omega (g)$. From these bounds, it readily follows that $\fpr_\Omega(g)\le 1/3$, for every element $g\in G\setminus\{1\}$, except when the socle of $G$ is $G_2(2)'$ and $^2G_2(3)'$. Now, $G_2(2)'\cong \mathrm{PSU}_3(3)$ and  $^2G_2(3)'\cong \mathrm{PSL}_2(8)$; these cases can be analyzed with a help of a computer. The maximum fixed point ratio for $G$ when the socle is $\mathrm{PSL}_2(8)$ is $1/3$ (arising from the natural action of $\mathrm{P}\Gamma\mathrm{L}_2(8)$ on the nine points of the projective line). The same holds for groups having socle $\mathrm{PSU}_3(3)$, the maximum $1/3$ is achieved on the primitive action of degree  $36$. 
\end{proof}

The proof of Theorem~\ref{proposition:7} in the case when $T$ is an exceptional simple group of Lie type now follows by applying
Lemmas~\ref{lemma:4} and~\ref{l:EX} to
 a maximal system of imprimitivity $\Sigma$ of the action of $G$  on $\Omega$. 

\subsection{Alternating groups}

Suppose now that  $T$ (as an abstract group) is isomorphic to the alternating group $\Alt(n)$, for some $n\in\mathbb{N}$ with $n\ge 5$. For this proof, 
we argue  by induction on $n$. We first consider the case that $n\in \{5,6,7,8\}$, this will avoid some  detour  in our arguments. The result in this case follows with a computation with the invaluable computer algebra system magma~\cite{magma}. From now on we may assume that $n\ge 9$ and hence, in particular, $G=\Alt(n)$ or $G=\Sym(n)$. Since the action of $\Alt(n)$ on the cosets of one of its Sylow $2$-subgroups extends to an action of $\Sym(n)$, we may assume that  
 $G=\Sym(n)$.

\smallskip
\noindent\textsc{Case $n$ odd.}
\smallskip

\noindent Let $H$ be a subgroup of $G$ with $H\cong \Sym(n-1)$ and  $G_\omega\le H$, for some $\omega\in \Omega$. (Observe that this is possible because $n$ is odd.) Let $\Sigma$ be the system of imprimitivity determined by the overgroup $H$ of $G_\omega$. Clearly,
$$\Fix_{\Omega}(g)=\bigcup_{B\in\Fix_{\Sigma}(g)}\Fix_{B}(g).$$
As $n-1\ge 5$ and as $H$ acts faithfully on each of its orbits on $\Omega$, we deduce by induction that $|\Fix_{B}(g)|\le |B|/3$, for each $B\in \Fix_{\Sigma}(g)$. Therefore, $\fpr_{\Omega}(g)\le 1/3$.

\smallskip
\noindent\textsc{Case $n$ even.}
\smallskip

\noindent Let $H$ be a subgroup of $G$ isomorphic to the imprimitive wreath product $\Sym(n/2)\wr \Sym(2)$ and with $G_\omega\le H$, for some $\omega\in \Omega$. (Observe that this is possible because $n$ is even.) As above, $$\Fix_{\Omega}(g)=\bigcup_{B\in\Fix_{\Sigma}(g)}\Fix_{B}(g).$$ Let $B\in \Fix_\Sigma(g)$ and, when $\Fix_B(g)\ne\emptyset$, let $\omega'\in \Fix_B(g)$. Now, $G_\omega\le G_B\cong H$ and hence, without loss of generality, we may suppose that $\omega'=\omega$ and $G_B=H$. In what follows we aim to estimate $\Fix_B(g)$.

Now, $G_\omega=P\wr \Sym(2)$, where $P$ is a Sylow $2$-subgroup of $\Sym(n/2)$. The action of $H=\Sym(n/2)\wr\Sym(2)$ on the cosets of $P\wr\Sym(2)=G_\omega$ (that is, the action of $G$ on $\Omega$) is permutation equivalent to the natural product action of $\Sym(n/2)\wr\Sym(2)$ on the Cartesian product $\Delta^2$, where $\Delta=P\backslash \Sym(n/2)$. (This is clear from the structure of $H$ and $G_\omega$, and we refer to~\cite{PrSn} for more details on Cartesian decompositions of permutation groups.) Now, we can write the element $g$ in the form $(h_1,h_2)$ or in the form $(h_1,h_2)(1\,2)$, where $h_1,h_2 \in P$ and $(1\,2)$ is the element swapping the two factors of $\Delta^2$, that is, $(\delta_1,\delta_2)^{(1\,2)}=(\delta_2,\delta_1)$ for every $(\delta_1,\delta_2)\in \Delta$. A permutation of the second kind fixes at most $|\Delta|$ points and hence $$\fpr_{\Omega}(g)=\fpr_{\Delta^2}(g)\le \frac{1}{|\Delta|}=\frac{1}{|\Sym(n/2):P|}\le \frac{1}{3}.$$ A permutation of the first kind has the property that either $h_1\ne 1 $ or $h_2\ne 1$. Since $n/2\ge 5$, we may apply induction and say that the non-identity element $h_1$ or $h_2$ fixes at most $1/3$ of its domain and hence so does $g$.

\subsection{Classical groups}
Suppose finally that the socle $T$ of $G$ is a simple classical group defined over the finite field of size $q$ and recall that we may assume that $T\le G\le \Aut(T)$.

\begin{notation}\label{generalnotation}{\rm
For twisted groups our notation for $q$ is such that $\PSU_n(q)$ and $\POmega_{2m}^-(q)$ are the twisted groups contained in $\PSL_n(q^2)$ and $\POmega_{2m}^+(q^2)$, respectively.
We write $q=p^e$, for some prime $p$ and some $e\geq 1$, and we define
\begin{equation*}
q_0:= \begin{cases}
q^{2} & \mbox{if $G$ is unitary,}\\
q & \mbox{otherwise.}
\end{cases}
\end{equation*}

We let $V$ be the natural module defined over the field $\mathbb{F}_{q_0}$ of size $q_0$ for the covering group of $T$, and we let $n$ be the dimension of $V$ over $\mathbb{F}_{q_0}$.}
\end{notation}

In studying actions of classical groups, it is rather natural to distinguish between those actions which permute the subspaces of the natural module and those which do not. The stabilizers of subspaces are generally rather large (every parabolic subgroup falls into this class) and therefore the fixed-point-ratio in these cases also tends to be rather large. As the culmination of an important series of papers~\cite{Tim1,Tim2,Tim3,Tim4}, Burness obtained remarkably good upper bounds on the fixed-point-ratio for each finite almost simple classical group in \textit{non-subspace actions}. For future reference, we first need to make the definition of non-subspace action precise.

\begin{definition}\label{def}{\rm Assume Notation~$\ref{generalnotation}$. A subgroup $H$ of $G$ is a \textit{subspace subgroup} if, for each maximal subgroup $M$ of $T$ that contains $H\cap T$,  one of the following conditions hold:
\begin{description}
\item[(a)]$M$ is the stabilizer in $T$ of a proper non-zero subspace $U$ of $V$, where $U$ is totally singular, or non-degenerate, or, if $T$ is orthogonal and $p=2$, a non-singular $1$-subspace ($U$ can be any subspace if $T=\PSL(V)$);
\item[(b)]$M=\O_{2m}^{\pm}(q)$ and $(T,p)=(\Sp_{2m}(q)',2)$.
\end{description}}
\end{definition}
A transitive action of $G$ on a set $\Delta$ is a \textit{subspace action} if the point-stabilizer $G_\delta$ of  $\delta\in \Delta$ is a subspace subgroup of $G$; \textit{non-subspace subgroups} and \textit{actions} are defined accordingly.
For the convenience of the reader we report~\cite[Theorem~$1$]{Tim1}. 
\begin{theorem}[{\cite[Theorem~$1$]{Tim1}}]\label{timthm}
Let $G$ be a finite almost simple classical group acting transitively and faithfully on a set $\Delta$ with point-stabilizer $G_\delta\leq H$, where $H$ is a maximal non-subspace subgroup of $G$. Let $T$ be the socle of $G$. Then, for every $x\in G\setminus\{1\}$, we have \[\mathrm{fpr}_\Omega(x)<|x^G|^{-\frac{1}{2}+\frac{1}{n}+\iota},\]
 where $n$ is defined in~\cite[Definition~2]{Tim1} and either $\iota=0$ or $(T,H,\iota)$ is listed in~\cite[Table~1]{Tim1}.
\end{theorem}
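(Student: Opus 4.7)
The plan is to reduce to the standard fixed-point-ratio identity $\fpr_\Delta(x)=|x^G\cap G_\delta|/|x^G|$ (this is Lemma~\ref{eq:22} applied with $Y=G$). Because $G_\delta\le H$, it suffices to prove
$|x^G\cap H| < |x^G|^{1/2+1/n+\iota}$,
so the whole theorem reduces to bounding $|x^G\cap H|$ from above and $|x^G|$ from below.

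First I would invoke Aschbacher's theorem to sort the maximal non-subspace subgroup $H$ into one of the geometric classes $\mathcal{C}_2,\ldots,\mathcal{C}_8$ or into the almost simple class $\mathcal{S}$ (the relevant parts of $\mathcal{C}_1$ are excluded by the non-subspace hypothesis, cf.\ Definition~\ref{def}). For each geometric class, $H$ has an explicit description in terms of smaller classical groups, symmetric groups, or tensor/extension/subfield stabilizers, and one can read off or estimate $|x^G\cap H|$ by decomposing elements of $H$ according to their action on the underlying geometric structure. For the class $\mathcal{S}$, the bound coming from Liebeck's theorem on absolutely irreducible almost simple subgroups of classical groups (roughly $|H|<q^{2n+4}$) gives $|x^G\cap H|\le |H|$, which turns out to be small enough.

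For the lower bound on $|x^G|$, I would appeal to the centralizer formulas of Wall together with the explicit estimates of Liebeck and Seitz, which express $|\cent{G}{x}|$ in terms of the Jordan decomposition of $x$ and produce bounds of the shape $|x^G|\ge c\, q_0^{\alpha(x)}$, where $\alpha(x)$ grows linearly in the dimension of the support of $x$. Combining these with the upper bounds on $|x^G\cap H|$ above yields exponents that can be matched against the target $1/2-1/n-\iota$.

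The main obstacle is the sheer volume of casework: each Aschbacher class must be treated separately, and within each class one must further partition by element type (unipotent Jordan block structure, semisimple eigenvalue configuration, mixed, and field/graph-automorphism induced). The exceptional triples $(T,H,\iota)$ with $\iota>0$ listed in~\cite[Table~1]{Tim1} arise precisely when a small-support element of $T$ (for instance a transvection, a reflection, or a short-root element) happens to have an unusually large intersection of its conjugacy class with $H$ — typically when $H$ is a $\mathcal{C}_2$ stabilizer of a direct-sum decomposition with very few summands, or a $\mathcal{C}_8$ classical subgroup of the same type as $T$. Isolating these genuine exceptions and verifying the exact value of $\iota$ for each one is the delicate part of the analysis, and is what occupies the series \cite{Tim1,Tim2,Tim3,Tim4}.
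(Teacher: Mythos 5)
This statement is not proved in the paper at all: it is Burness's theorem, quoted verbatim from \cite[Theorem~1]{Tim1} ``for the convenience of the reader'', and the paper treats it as a black box (its actual proof is the content of the series \cite{Tim1,Tim2,Tim3,Tim4}). So there is no in-paper argument to compare yours against; the only legitimate ``proof'' in the context of this paper is the citation itself.

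Judged on its own terms, your outline does describe the correct high-level strategy of the cited proof: reduce via the orbit-counting identity of Lemma~\ref{eq:22} (with $Y=G$) and the containment $G_\delta\le H$ to the inequality $|x^G\cap H|<|x^G|^{1/2+1/n+\iota}$, then run through the Aschbacher classes $\mathcal{C}_2,\ldots,\mathcal{C}_8$ and $\mathcal{S}$ (the subspace cases being excluded as in Definition~\ref{def}), bounding $|x^G\cap H|$ from the geometric structure of $H$ and $|x^G|$ from below via centralizer estimates, with the exceptional values of $\iota$ arising from small-support elements meeting $H$ in unusually large classes. But this is a proof \emph{plan}, not a proof: every quantitative step --- the lower bounds on $|x^G|$ for each element type, the class-by-class upper bounds on $|x^G\cap H|$, the verification that the exponent $\frac12-\frac1n-\iota$ suffices, and above all the identification of the exact exceptional triples $(T,H,\iota)$ of \cite[Table~1]{Tim1} --- is deferred, and that deferred material is precisely where the theorem lives (it occupies four substantial papers). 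As written, your argument would be circular if offered as a replacement for the citation, since the delicate part you acknowledge but do not carry out is the theorem. The appropriate course here, as in the paper, is simply to cite \cite{Tim1}; if you intend an independent proof, you would need to supply the casework rather than gesture at it.
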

In Theorem~\ref{timthm}, apart from $\PSL_3(2)$ and $\PSp_4(2)'$ (where $n=2$), $n$ is exactly as in Notation~$\ref{generalnotation}$.
The upper bound in Theorem~\ref{timthm} is quite sharp when $n$ is large and is extremely useful for our application. However, for small values of $n$,  Theorem~\ref{timthm} loses all of its power. For instance, when $T=\PSL_4(q)$, we see from~\cite[Table~$1$]{Tim1} that $\iota=1/4$ and hence the upper bound in Theorem~\ref{timthm} only says $\fpr_\Omega(x)\leq |x^G|^0=1$.  However, we point out that  Burness and Guest~\cite{BG} have strengthened Theorem~\ref{timthm} for linear groups of very small rank. 

We are now well equipped for the proof of Theorem~$\ref{proposition:7}$ in the case where $T$ is a non-abelian simple classical group.
 We use the notation that we have established above. We argue by contradiction and we suppose that there exists $g\in G\setminus\{1\}$ with $\fpr_\Omega(g)>1/3$. Without loss of generality, we may assume that $o(g)=2$. Recall that we are proving Theorem~$\ref{proposition:7}$ by induction on $|\Omega|+|G|$. In particular, we may suppose that $G=\langle T,g\rangle$.

\smallskip
\noindent\textsc{Case ``$q\ge 4$".}
\smallskip

\noindent When $q\ge 4$,~\cite[Theorem 1']{LS} yields that the pair $(T,g)$ is in Tables~1 and~2 in \cite{LS}. These tables, together with the pair $(T,g)$, have some additional information on the subgroup $G_\omega$, on a maximal subgroup $M$ of $G$ containing $G_\omega$ and on $\fpr_\Omega(g)$. Using this detailed information, a routine case-by-case analysis yields that $\fpr_\Omega(x)\le 1/3$, for every $x\in G\setminus\{1\}$.  

\smallskip
\noindent\textsc{Case ``$q\le 3$ and $n\le 8$".}
\smallskip

\noindent Since we have only a finite number of cases to check, we have proved the result invoking the help of a computer.
 This was done with a very naive 
algorithm, based on Lemma~\ref{lemma:4}, which we now briefly explain.
 Given a group $X$ and a collection of  subgroup $\mathcal{Y}$ of $X$, we construct all maximal subgroups $\mathcal{M}_Y$ of $Y$, for each $Y\in\mathcal{Y}$. For each $M\in\mathcal{M}_Y$, we construct the permutation representation of $X$ acting on $M\backslash X$. If $X$ contains a non-identity permutation  with $\fpr_{M\backslash X}(x)>1/3$, then we save $M$ in a set $\mathcal{Y}'$, otherwise we disregard $M$ from further analysis. Next, we apply this routine with $\mathcal{Y}$ replaced by $\mathcal{Y}'$. (This computation might seem very time and memory consuming but for most groups this procedure stops after the first iteration.) Finally, we
 consider the collection $\mathcal{X}$ of all subgroups returned in the previous procedure and check that the claim of the theorem holds in each case.

\smallskip

For the rest of the proof, we may assume $q\le 3$ and $n\ge 9$. 

\smallskip
\noindent\textsc{Case ``$G_\omega\le H$, where $H$ is a maximal non-subspace subgroup of $G$".}
\smallskip

\noindent We use the result of Burness described earlier. Then $1/3<|\fpr_{\Omega}(g)|<|g^G|^{-1/2+1/n+\iota}$ and 
\begin{equation}\label{eq:newbad}
|g^G|\le 3^{\frac{1}{2}-\frac{1}{n}-\iota}.
\end{equation} Using the information on $\iota$ in~\cite[Table~1]{Tim1}, some very basic information on the conjugacy classes of $T$ (which can be find in \cite{GLS} or in~\cite{ATLAS} for small groups) and $n\ge 9$, we find with a case-by-case analysis that~\eqref{eq:newbad}  is never satisfied.

\smallskip

For the rest of the proof, we may assume that the only maximal subgroups $H$ of $G$ with $G_\omega\le H$ are subspace subgroups. Case~{\bf (b)} in Definition~\ref{def} does not arise here because $G_\omega$ is a Sylow $2$-subgroup of $G$ (recall~\eqref{sylow}), but $|\mathrm{Sp}_{2m}(q):\mathrm{O}_{2m}^\pm(q)|$ is even. In particular,  every maximal subgroup $H$ of $G$ with $G_\omega\le H$ is in the Aschbacher class $\mathcal{C}_1$.

\smallskip
\noindent\textsc{Case ``There exists a maximal subgroup $H$ of $G$ with $G_\omega\le H$ and ${\bf O}_2(H)\cap g^G=\emptyset$".}
\smallskip

\noindent  Let $\Sigma$ be the system of imprimitivity determined by the overgroup $H$ of $G_\omega$. Consider $\Delta:=\omega^H$ and the permutation group $H^\Delta$ induced by $H$ on $\Delta$. Since $G_\omega$ is a Sylow $2$-subgroup of $G$, we deduce that $G_\omega$ is a Sylow $2$-subgroup of $H$. Therefore, the kernel of the action of $H$ on $\Delta$ is $$\bigcap_{h\in H}G_\omega^h={\bf O}_2(H).$$
Thus $H^{\Delta}\cong H/{\bf O}_2(H)$ and hence ${\O }_2(H^\Delta)=1$.
We have
\begin{equation}\label{equation:equation}\Fix_{\Omega}(g)=\bigcup_{\Delta'\in\Fix_{\Sigma}(g)}\Fix_{\Delta'}(g).\end{equation}
Let $\Delta'\in \Fix_\Sigma(g)$. Then $\Delta'=\Delta^x$, for some $x\in G$, and $H^x$ is the setwise stabilizer of $\Delta'$ in $G$. Therefore ${\bf O}_2((H^x)^{\Delta'})=1$. Since $|\Delta'|<|\Omega|$, by induction we have that either $\fpr_{\Delta'}(g)\le 1/3$, or $g$ fixes pointwise each element of $\Delta'$. However the latter case does not arise because the kernel of the action of $H^x$ on $\Delta'$ is ${\bf O}_2(H)^x$ and we are assuming ${\bf O}_2(H)\cap g^G=\emptyset$. Thus $g$ does not act trivially on $\Delta'$ and $\fpr_{\Delta'}(g)\le 1/3$. Now,~\eqref{equation:equation} yields $\fpr_\Omega(g)\le 1/3$.

\smallskip

For the rest of the proof, we may suppose that, every maximal subgroup $H$ of $G$ with $G_\omega\le H$, is in the Aschbacher class $\mathcal{C}_1$ and satisfies ${\bf O}_2(H)\cap g^G\ne \emptyset$. In particular, ${\bf O}_2(H)\ne 1$ and hence $H$ is a $2$-local subgroup. Using the information on the maximal subgroups $H$ of the finite classical groups in the Aschbacher class $\mathcal{C}_1$ in~\cite[Section~$4.1$]{KL} and the fact that ${\bf O}_2(H)\cap g^G\ne\emptyset$, we obtain $g\in T$ and $$G=\langle T,g\rangle=T.$$ 

\smallskip
\noindent\textsc{Case ``$q=3$''.}
\smallskip

\noindent Using again the information in~\cite[Section~$4.1$]{KL}, we have that, if $H$ is a maximal subspace subgroup with $G_\omega\le H$ and $q=3$, then $|{\bf O}_2(H)|\le 2$. In particular, we are in the position to refine slightly the argument in~\eqref{equation:equation}.  Let $\Sigma$ be the system of imprimitivity determined by the overgroup $H$ of $G_\omega$. We partition $\Fix_{\Sigma}(g):=\Sigma_1\cup\Sigma_2$ in two subsets: $\Sigma_1$ consists of the $\Delta\in \Fix_{\Sigma}(g)$ with $g$ not fixing pointwise $\Delta$ and $\Sigma_2$ consists of the $\Delta\in \Fix_{\Sigma}(g)$ with $g$ fixing pointwise $\Delta$. Observe that, if $\Delta\in \Sigma_2$, then $g\in {\bf O}_2(G_{\{\Delta\}})$ and hence ${\bf O}_2(G_{\{\Delta\}})=\langle g\rangle$. If $\Sigma_2$ contains two distinct elements $\Delta$ and $\Delta'$, we deduce ${\bf O}_2(G_{\{\Delta\}})=\langle g\rangle={\bf O}_2(G_{\{\Delta'\}})$ and hence $\langle g\rangle$ is centralized by $\langle G_{\{\Delta\}},G_{\{\Delta'\}}\rangle=G$, where the last equality follows from the maximality of $G_{\{\Delta\}}$ and $G_{\{\Delta'\}}$ in $G$ and from $\Delta\ne \Delta'$. Therefore $|\Sigma_2|\le 1$. Thus, applying the inductive hypothesis for the action of $g$ on $\Delta\in \Fix_{\Sigma}(g)$, from~\eqref{equation:equation} we deduce
$$|\Fix_\Omega(g)|=\sum_{\Delta\in\Sigma_1}|\Fix_{\Delta}(g)|+\sum_{\Delta\in\Sigma_2}|\Fix_\Delta(g)|\le |\Sigma_1|\frac{|\omega^H|}{3}+|\Sigma_2||\omega^H|\le |\omega^H|\left(\frac{|\Fix_\Sigma(g)|-1}{3}+1\right).$$
Since $g\ne 1$, $g$ does not act trivially  on $\Sigma$ and hence $|\Fix_\Sigma(g)|\le |\Sigma|-2$. Thus
$$
|\Fix_\Omega(g)|\le |\omega^H|\left(\frac{|\Sigma|-3}{3}+1\right)=\frac{|\omega^H||\Sigma|}{3}=\frac{|\Omega|}{3}.
$$

\smallskip
\noindent\textsc{Case ``$q=2$''.}
\smallskip

\noindent  Here, $G=\langle T,g\rangle$ is one of the following groups $\mathrm{PSL}_n(2)$, $\mathrm{PSU}_n(2)$, $\mathrm{PSp}_n(2)$, $\mathrm{P}\Omega_n^+(2)$ and $\mathrm{P}\Omega_n^-(2)$.

Recall that $V$ is the underlying module for $G$. Let $W$ be a  totally isotropic subspace of $V$ of dimension $1$ fixed by $G_\omega$ and let $H$ be the stabilizer of $W$. (The existence of $W$ is guaranteed by the fact that $G_\omega$ is a $2$-group and by the fact that $V$ has characteristic $2$.) Now, $H$ is a maximal  $\mathcal{C}_1$-subgroup of $G$ with $G_\omega \le H$. Considering the cases we are left at this point of the proof, $H$ is a maximal parabolic subgroup of $G$. Since ${\bf O}_2(H)\cap g^G\ne \emptyset$, we deduce from the structure of $H$ in~\cite{KL}, that $g$ is a transvection, that is, $\dim\cent V g=n-1$.

Let $U$ be the group of upper unitriangular matrices in $\mathrm{GL}_n(2)$ and  let $T(n)$ be the number of transvections in $U$. It is easy to show arguing inductively on $n$ that $T(n)=2T(n-1)+2^{n-1}-1$. Using this recursive relation and the fact that $T(2)=1$, we obtain $T(n)=(n-2)2^{n-1}+1$.

From the previous paragraph, we have $|g^G\cap G_\omega|\le (n-2)2^{n-1}+1$ and hence
$$\fpr_\Omega(g)=\frac{|g^G\cap G_\omega|}{|g^G|}\le \frac{(n-2)2^{n-1}+1}{|G:\cent Gg|}\le \frac{1}{3},$$
where the last inequality follows by comparing $|G:\cent G g|$ with $(n-2)2^{n-1}+1$. (Information on $|G:\cent G g|$ can be found for all the groups under consideration either in~\cite{KL} or in~\cite{GLS}.)

This completes the proof of Theorem~\ref{proposition:7}.

\thebibliography{30}

\bibitem{AtlasRep}R.\ Abbott,  J. Bray, S. Linton, S. Nickerson, S. Norton, R. Parker, I. Suleiman, J. Tripp, P. Walsh, R. Wilson, AtlasRep - a GAP package, Version 1.5.1, 2016, package web address \verb+(http://www.math.rwth-aachen.de/~Thomas.Breuer/atlasrep/)+.

\bibitem{Bab}
L.\ Babai, On the order of uniprimitive permutation groups, 
{\em Ann.\ of Math.} {\bf 113} (1981), 553--568.

\bibitem{magma}W.~Bosma, J.~Cannon, C.~Playoust, The Magma algebra system. I. The user language, \textit{J. Symbolic Comput.} \textbf{24} (1997), 235--265.

\bibitem{BreuerLux}T.~Breuer, K.~Lux, The multiplicity-free permutation characters of the sporadic simple groups and their automorphism groups, \textit{Comm. Algebra} \textbf{24} (1996), 2293--2316.

\bibitem{Tim1}T.~Burness, Fixed point ratios in actions of finite classical groups I, \textit{J.~Algebra} \textbf{309} (2007), 69--79.

\bibitem{Tim2}T.~Burness, Fixed point ratios in actions of finite classical groups II, \textit{J.~Algebra} \textbf{309} (2007), 80--138.

\bibitem{Tim3}T.~Burness, Fixed point ratios in actions of finite classical groups III, \textit{J.~Algebra} \textbf{314} (2007), 693--748.

\bibitem{Tim4}T.~Burness, Fixed point ratios in actions of finite classical groups IV, \textit{J.~Algebra} \textbf{314} (2007), 749--788.

\bibitem{BG}T.~Burness, S.~Guest, On the uniform spread of almost simple linear groups, \textit{Nagoya Math. J.} \textbf{209} (2013), 37--110.

\bibitem{conder}M. Conder, P.~Dobcs\'{a}nyi, Trivalent symmetric graphs on up to 768 vertices, \textit{J. Combin. Math. Combin. Comput. }\textbf{40} (2002), 41--63.

\bibitem{condercensus}
M.~Conder, \href{http://www.math.auckland.ac.nz/~conder/symmcubicgraphs.tar.gz}{http://www.math.auckland.ac.nz/~conder/symmcubicgraphs.tar.gz}

\bibitem{ATLAS}J. H. Conway, R. T. Curtis, S. P. Norton, R. A. Parker,  R. A. Wilson, Atlas of finite groups, Maximal subgroups and ordinary characters for simple groups,
              With computational assistance from J. G. Thackray, Oxford University Press, Eynsham, 1985.

\bibitem{GAP4}The GAP~Group, GAP -- Groups, Algorithms, and Programming,
                    Version 4.8.7, 2017, \verb+(http://www.gap-system.org)+.

\bibitem{GluMag}
D.\ Gluck and K.\ Magaard, 
Character and fixed point ratios in finite classical groups,
{\em Proc.\ London Math.\ Soc.}  {\bf 71} (1995), 547--584.

\bibitem{GLS}D.~Gorenstein, R.~Lyons, R.~Solomon, \textit{The classification of the finite simple groups, Number~$3$} Volume~40, 1998. 

\bibitem{GMPS}S.~Guest, J.~Morris, C.~E.~Praeger, P.~Spiga, On the maximum orders of elements of finite almost simple groups and primitive permutation groups, \textit{Trans. Amer. Math. Soc.} \textbf{367} (2015), no. 11, 7665--7694.

\bibitem{GurMag}R.~Guralnick, K.~Magaard, On the minimal degree of a primitive permutation group, \textit{J. Algebra} \textbf{207} (1998), 127--145.

\bibitem{Isaacs}M. Isaacs, \textit{Character theory of finite groups}, Academic Press, New York, 1976.

\bibitem{KPS}
J.\ Kempe, L.\ Pyber, A.\ Shalev,
 Permutation groups, minimal degrees and quantum computing
{\em Groups Geom.\ Dyn.} {\bf 1} (2007), 553--584.

\bibitem{KL}P.~Kleidman, M.~Liebeck, \textit{The subgroup structure of the finite classical groups}, London Mathematical Society Lecture Note Series \textbf{129}, Cambridge University Press, Cambridge, 1990.

\bibitem{La} R. Lawther, M. W. Liebeck, G. M. Seitz, Fixed point ratios in actions of finite exceptional
groups of Lie type, \textit{Pacific Journal of Mathematics} \textbf{205} (2002), 393--464.

\bibitem{LS}
M.\ Liebeck, J.\ Saxl, Minimal degrees of primitive permutation groups, with an application to monodromy groups of covers of Riemann surfuces, \textit{Proc. London Math. Soc. (3)} \textbf{63} (1991), 266--314.

\bibitem{LiebShal}
M.\ Liebeck, A.\ Shalev,
On fixed points of elements in primitive permutation groups,
{\em J.\ Algebra} {\bf 421} (2015), 438--459.
                   
\bibitem{MS}M. Muzychuk, P. Spiga, Finite primitive groups of small rank: symmetric and sporadic groups, \textit{J. Algebraic Combin.}, to appear, DOI:10.1007/s10801-019-00896-5.

\bibitem{NewOBrSha}
M.\ F.\ Newman, E.\ A.\ O\'Brien and A.\ Shalev, The fixity of groups of prime-power order,
{\em Bull.\ London Math.\ Soc.} {\bf 27} (1995), 225--231.

\bibitem{PotSpi}
P.~Poto\v{c}nik, P.~Spiga,
On the number of fixed points of automorphisms of vertex-transitive graphs of bounded valency, arXiv:1909.05456.


\bibitem{census1}P.~Poto\v{c}nik, P.~Spiga, G.~Verret, Cubic vertex-transitive graphs on up to 1280 vertices, \textit{J. Symbolic Comput. }\textbf{50} (2013), 465--477.

\bibitem{census2}P.~Poto\v{c}nik, P.~Spiga, G.~Verret, A census of 4-valent half-arc-transitive graphs


\bibitem{11_1}C.~E.~Praeger, An O'Nan-Scott Theorem for finite quasiprimitive permutation groups and an
application to $2$-arc transitive graphs, \textit{J. Lond. Math. Soc. (2)} \textbf{47} (1993), 227--239.

\bibitem{13_1}C.~E.~Praeger, Finite quasiprimitive graphs, in \textit{Surveys in combinatorics}, London Mathematical Society Lecture Note Series, vol. 24 (1997), 65--85.

\bibitem{PrSn}C.~E.~Praeger, C.~Schneider, \textit{Permutation groups and cartesian decompositions}, London Mathematical Society, Lecture notes series \textbf{449}, Cambridge, 2018.

\bibitem{DR}D.~J.~S.~Robinson, \textit{A Course in the Theory of Groups}. Springer-Verlag, New York, (1993).

\bibitem{33}A.~V.~Vasil'ev, V.~D.~Mazurov, Minimal permutation representations of finite simple orthogonal groups (Russian, with Russian summary), \textit{Algebra i Logika} \textbf{33} (1994), no. 6, 603--627; English transl., \textit{Algebra and Logic} \textbf{33} (1994), no. 6, 337--350.

\bibitem{wilsonArXiv}
R.~A.~Wilson, Maximal subgroups of sporadic groups, Finite simple groups: thirty years of the atlas and beyond, \textit{Contemp. Math.} \textbf{694}, Amer. Math. Soc., Providence, RI, 2017.

\end{document}